\documentclass[sn-mathphys,Numbered]{sn-jnl}

\usepackage{graphicx}
\usepackage{amsmath}
\usepackage{amssymb}
\usepackage{amsfonts}
\usepackage{mathrsfs}
\usepackage{mathtools}
\usepackage{setspace}

\theoremstyle{thmstyleone}
\newtheorem{theorem}{Theorem}[section]

\theoremstyle{thmstylethree}
\newtheorem{definition}{Definition}[section]

\theoremstyle{thmstyletwo}
\newtheorem{remark}{Remark}[section]

\makeatletter
\@addtoreset{equation}{section}
\makeatother
\renewcommand{\theequation}{\thesection.\arabic{equation}}
\newcommand{\subclass}[1]{\par\addvspace\smallskipamount\noindent\textbf{Mathematics Subject Classification (2020):}\enspace#1}

\begin{document}

\title[Analytical Evaluation of Ramanujan Series for $1/\pi$]{Analytical Evaluation of Ramanujan Series for $1/\pi$ via Degree-2, Degree-3, Degree-7, and Degree-19 Modular Transformations}

\author{\fnm{Pablo} \sur{Fern\'andez Refolio}}
\email{pablezfr@gmail.com}

\affil{\orgdiv{Independent Researcher}, \orgaddress{\city{Madrid}, \country{Spain}}}

\abstract{We provide an explicit analytical evaluation of Ramanujan-type series for $1/\pi$. Focusing on the singular moduli $k_{r}$ for $r \in \{5, 7, 13, 37\}$, we demonstrate that the underlying elliptic identities can be established through lower-degree modular transformations. Specifically, we resolve the case $r = 5$ via a combination of degree-2 and degree-3 modular transformations; the case $r = 7$ utilizing the modular transformation of degree 2; the case $r = 13$ via a combination of degree-2 and degree-7 modular transformations; and the case $r = 37$ via a combination of degree-2 and degree-19 modular transformations.}

\keywords{Ramanujan-type series, Singular moduli, Modular equations, Elliptic integrals, Class invariants}

\maketitle

\subclass{33C05, 33E05, 11F03} 

\section{Introduction}
The evaluation of rapidly converging series for $1/\pi$ constitutes one of the most brilliant chapters in Srinivasa Ramanujan's 1914 paper \cite{ramanujan1914}. At the heart of his derivations lies the theory of elliptic functions and modular equations. This rich mathematical framework traces its origins back to Leonhard Euler's discovery of the addition theorem for elliptic integrals, which was subsequently augmented by John Landen's celebrated transformation \cite{landen1775}. The systematic classification of these integrals into canonical forms was later achieved by Adrien-Marie Legendre \cite{legendre1825}. However, the field underwent a profound paradigm shift through Carl Friedrich Gauss's independent discovery of the arithmetic-geometric mean (AGM) \cite{almkvistberndt1988,borwein1987}, alongside Carl Gustav Jacob Jacobi's \cite{jacobi1829} revolutionary inversion of these integrals into fully fledged elliptic functions. Crucially, Thomas Clausen's 1828 hypergeometric product identity \cite{clausen1828} provided the essential analytical bridge, allowing the squaring of these complete elliptic integrals and mapping ${}_2F_1$ functions directly into the ${}_3F_2$ hypergeometric series that underpin modern $\pi$-evaluations.

Historically, the earliest breakthrough in this family of formulas belongs to Gustav Bauer \cite{bauer1859}, who in 1859 proved a remarkable alternating series for $2/\pi$ via Fourier--Legendre expansions. Decades later, and seemingly unaware of Bauer's prior proof, Ramanujan famously included this exact identity as a central showcase of his discoveries in his legendary first letter to G. H. Hardy in 1913.

While Ramanujan left a spectacular list of seventeen rapidly converging series for $1/\pi$, he rarely provided explicit proofs for the numerical coefficients involved. The monumental task of reconstructing and editing these proofs was heavily advanced by Bruce Berndt in his multi-volume analysis of Ramanujan's notebooks \cite{berndt1985, berndt1989, berndt1991, berndt1994, berndt1998} and his lost notebook \cite{andrewsberndt2005}, uncovering the deep analytical structure behind the formulas. Furthermore, the foundational work of Jonathan and Peter Borwein \cite{borwein1987} systematically formalized the connection between these series and the Arithmetic-Geometric Mean (AGM). Crucial to their framework is the introduction of the elliptic $\alpha$-function, defined at the singular value $k_{r}$ as
\begin{equation}
\alpha(r) = \frac{\pi}{4 K^2(k_r)} + \sqrt{r} - \frac{E(k_r) \sqrt{r}}{K(k_r)},
\end{equation}
where $K(k)$ and $E(k)$ denote the complete elliptic integrals of the first and second kind, respectively. This singular invariant elegantly encapsulates the algebraic structure required to construct rapidly converging Ramanujan-type formulas. Building directly upon the foundational framework and the elliptic $\alpha$-function parameterization established by the Borwein brothers \cite{borwein1987}, David and Gregory Chudnovsky subsequently generalized these principles to develop their celebrated, ultra-fast-converging algorithms for the high-precision computation of $\pi$ \cite{chudnovsky1989ramanujan}.

In recent decades, this paradigm was further advanced through the pioneering work of the late Jesús Guillera \cite{guillera2002WZ} via the introduction of Wilf--Zeilberger (WZ) pairs \cite{wilfzeilberger1990}. Utilizing higher hypergeometric functions, Guillera discovered entirely new classes of Ramanujan-like series, thereby opening new vistas in the modern theory of hypergeometric summation.

In this work, we challenge this approach by demonstrating that a structured decomposition utilizing lower-degree modular transformations yields complete, transparent, and hand-calculable derivations. By constructing a unified approach leveraging modular transformations of degrees 2, 3, 7, and 19, we present a transparent verification for the cases $r=5, 7, 13$, and $37$. This provides an alternative strategy for deriving modular identities to obtain certain Ramanujan series via the introduction of complex numbers.

\section{Preliminaries and Results}
The Euler Gamma function, denoted by $\Gamma(z)$, is defined via the following convergent improper integral for $\Re(z) > 0$
\begin{equation*}\label{eq:gamma_integral}
    \Gamma(z) = \int_0^\infty t^{z-1} e^{-t} \, dt.
\end{equation*}

To frame the complete elliptic integrals within the theory of hypergeometric functions, we recall Gauss's celebrated hypergeometric series~${}_2F_1(a,b;c;x)$ \cite{gauss1813}, which is defined for $0 < |x| < 1$ as
\begin{equation*}\label{eq:gauss_hypergeometric}
    {}_2F_1(a,b;c;x) = \sum_{m=0}^{\infty} \frac{(a)_m (b)_m}{(c)_m} \frac{x^m}{m!},
\end{equation*}
where $(a)_m = \Gamma(a+m)/\Gamma(a)$ denotes the rising factorial or Pochhammer symbol.
The complete elliptic integrals of the first and second kind, denoted by $K(k)$ and $E(k)$ respectively, admit the following equivalent representations via power series and hypergeometric expansions
\begin{equation*}\label{eq:K_expansion}
    K(k)  \coloneqq  \int_{0}^{\pi/2} \frac{dt}{\sqrt{1-k^2\sin^2t}} = \frac{\pi}{2}\sum_{m=0}^{\infty}\frac{(2m)!^2}{2^{4m}m!^4}k^{2m} = \frac{\pi}{2} \, {_2F_1}\left(\frac{1}{2}, \frac{1}{2}; 1; k^2\right),
\end{equation*}
with $k \in (0, 1)$ and
\begin{equation*}\label{eq:E_expansion}
    E(k)  \coloneqq  \int_{0}^{\pi/2} \sqrt{1-k^2\sin^2t} \, dt = \frac{\pi}{2}\sum_{m=0}^{\infty}\frac{(2m)!^2}{2^{4m}(1-2m)m!^4}k^{2m} = \frac{\pi}{2} \, {_2F_1}\left(-\frac{1}{2}, \frac{1}{2}; 1; k^2\right), 
\end{equation*}
with $k \in (0, 1]$.
These integrals fundamentally satisfy Legendre \cite{legendre1825} relation
\begin{equation}\label{eq:legendre_relation}
    K(k)E(k') + K(k')E(k) - K(k)K(k') = \frac{\pi}{2},
\end{equation}
where $k' = \sqrt{1-k^2}$ represents the complementary modulus.
The standard derivatives of $K(k)$ and $E(k)$ are given by
\begin{equation}\label{eq:derke}
  \frac{\mathrm{d}K}{\mathrm{d}k}=\frac{E(k)}{k(1-k^2)} - \frac{K(k)}{k},\hspace{.5cm}\frac{\mathrm{d}E}{\mathrm{d}k}=\frac{E(k)-K(k)}{k}.
\end{equation}
Furthermore, these complete elliptic integrals satisfy Landen descending transformations \cite{almkvistberndt1988, borwein1987, landen1775}
\begin{equation}\label{eq:landendownwardke}
    K(\frac{2\sqrt{k}}{1+k})=K(k)(1+k),\hspace{.5cm}E(\frac{2\sqrt{k}}{1+k})=\frac{2E(k)}{1+k}+K(k)(k-1),
\end{equation}
the Landen ascending transformations
\begin{equation}\label{eq:landeupwardke}
   K\left(\frac{1-k'}{1+k'}\right)= \frac{1+k'}{2}K(k),\hspace{.5cm}E(\frac{1-k'}{1+k'})=\frac{E(k)}{1+k'}+\frac{k'K(k)}{1+k'},
\end{equation}
and the Jacobi imaginary transformations
\begin{equation}\label{eq:jacobi_transke}
    K(ik) = \frac{1}{\sqrt{1+k^2}}K\left(\frac{k}{\sqrt{1+k^2}}\right),\hspace{.5cm}E(ik)=\sqrt{1+k^2}E\left(\frac{k}{\sqrt{1+k^2}}\right).
\end{equation}
When restricted to the real domain, the equality is satisfied unconditionally for all $k \in \mathbb{R}$.
Replacing  $k \to \frac{k}{k'}$ in \eqref{eq:jacobi_transke} we have
\begin{equation}\label{eq:jacobi_transke2}
K\left(\frac{ik}{k'}\right) = k'K(k),\hspace{.5cm}E\left(\frac{ik}{k'}\right) = \frac{1}{k'} E(k),
\end{equation}
with $k \in (0, 1)$. In this domain, the complementary modulus $k'$ remains real-valued and bounded such that $0 < k' < 1$, ensuring that the formulas map consistently to real outputs.
The complete elliptic integral of the first kind satisfies Jacobi's reciprocal modulus transformation. For a complex modulus $k$ in the right half-plane ($\Re(k) > 0$) avoiding the branch cuts, the identity is conventionally defined on its principal branch as
\begin{equation}
K\left(\frac{1}{k}\right) = k \left( K(k) - i K(k') \right). \label{eq:jacobireciprocal}
\end{equation}
\begin{definition}[Degree-$n$ Modular Relation]\label{eq:modular_relation_degree}
Let $k$ and $l$ be two complex elliptic moduli. Suppose that their corresponding complete elliptic integrals of the first kind satisfy the relation
\begin{equation}
n \frac{K(k')}{K(k)} = \frac{K(l')}{K(l)},
\end{equation}
where $k' = \sqrt{1-k^2}$ and $l' = \sqrt{1-l^2}$ denote the complementary moduli defined via their principal branches. Then, $l$ is said to be of degree $n$ over $k$.
\end{definition}
\begin{definition}
The Dedekind \cite{dedekind1877} eta function $\eta(\tau)$ is a complex-valued function defined on the upper half-plane $\mathbb{H} = \{\tau \in \mathbb{C} : \Im(\tau) > 0\}$ by the following infinite product
\begin{equation}\label{eq:dedekind_eta}
\eta(q) \coloneqq q^{1/24} \prod_{m=1}^{\infty} (1 - q^m),
\end{equation}
where $q = e^{2\pi i \tau}$ is the modular parameter. Furthermore, let $\tau = i \frac{K(k')}{K(k)}$ where $K(k)$ is the complete elliptic integral of the first kind. The square of the Dedekind eta function, $\eta^2(q)$, can be expressed explicitly in terms of $K(k)$ and the elliptic modulus $k$ via the relation
\begin{equation}\label{eq:eta_squared_elliptic}
\eta^2(q) = \frac{2K(k)}{\pi} \cdot 2^{-2/3} k^{1/3} (1 - k^2)^{1/6}.
\end{equation}
\end{definition}
Historically, this remarkable formula is a consequence of the pioneering work of Carl Gustav Jacob Jacobi \cite{jacobi1829}. Building upon this framework, we shall introduce the Ramanujan--Weber modular invariants to systematically analyze the arithmetic properties of the underlying singular moduli.
\begin{definition}
Let $q = e^{-\pi \sqrt{n}}$ where $n$ is a positive rational number. The Ramanujan--Weber modular invariants \cite{weber1895,ramanujan1914}, denoted as $G_n$ and $g_n$, are defined in terms of the Dedekind eta function $\eta(\tau)$ as
\begin{equation}\label{eq:Gn_definition}
G_n \coloneqq 2^{-1/4} q^{-1/24} \prod_{m=0}^{\infty} (1 + q^{2m+1}) = 2^{-1/4} \frac{\eta\left(\frac{1+i\sqrt{n}}{2}\right)}{\eta(i\sqrt{n})}.
\end{equation}
\begin{equation}\label{eq:gn_definition}
g_n \coloneqq 2^{-1/4} q^{-1/24} \prod_{m=0}^{\infty} (1 - q^{2m+1}) = 2^{-1/4} \frac{\eta\left(\frac{i\sqrt{n}}{2}\right)}{\eta(i\sqrt{n})}.
\end{equation}
\end{definition}
\begin{remark}
These invariants are algebraic numbers when $n$ is an integer. They are connected to the complete elliptic integral of the first kind $K(k)$ via the modulus $k$, where $g_n$ and $G_n$ can be expressed in terms of the singular modulus $k_n$ as
\begin{equation}
G_n = (2k_n k_n')^{-1/12}, \quad g_n = \left(\frac{2k_n}{k_n'^2}\right)^{-1/12},
\end{equation}
where $k_n' = \sqrt{1 - k_n^2}$ represents the complementary modulus.
\end{remark}
Furthermore, by inverting the classical definition of Ramanujan's class invariant the elliptic modulus $k_{n}$ can be recovered explicitly in terms of $G_n$ via the relation
\begin{equation}\label{eq:ramanujaninvariantrelation}
k_{n} = \frac{1}{\sqrt{2}} \sqrt{1 - \sqrt{1 - G_{n}^{-12}}}.
\end{equation}
Alternatively, using Ramanujan's second class invariant, the elliptic modulus $k$ can be expressed in an remarkably elegant form via the relation
\begin{equation*}
k_{n} = g_{n}^{6} \sqrt{g_{n}^{12} + g_{n}^{-12}} - g_{n}^{12}.
\end{equation*}
Let $q = e^{-\pi K(k')/K(k)}$ be the modular nome associated with the elliptic modulus $k$, where $K(k)$ denotes the complete elliptic integral of the first kind. The derivative of the nome with respect to the modulus is given by
\begin{equation}\label{eq:dqdk}
    \frac{dq}{dk} = \frac{\pi^2 q}{2 k (1-k^2) K^2(k)}.
\end{equation}
Equation~\eqref{eq:dqdk} emerges as a direct consequence of applying the chain rule and Legendre's relation~\eqref{eq:legendre_relation}, a fundamental identity originally due to Jacobi~\cite{jacobi1829}.
\begin{definition}
The Eisenstein--Ramanujan series $P(q)$, which corresponds to the normalized weight-2 Eisenstein \cite{eisenstein1847} series $E_2(\tau)$ under the change of variable $q = e^{2\pi i \tau}$, is defined on the punctured unit disk $0 < |q| < 1$ by the following infinite series
\begin{equation}
    P(q):=24 q \frac{d}{dq} \log{\eta(q)} = 1 - 24 \sum_{m=1}^{\infty} \frac{m q^m}{1 - q^m}.
\end{equation}
\end{definition}
We introduce the following classic formulas from the theory of elliptic functions, which have been extensively investigated by numerous authors and are treated in depth by Berndt et al.(see, e.g., \cite{baruahberndt2009eisenstein, baruahberndtChan2009}). These expressions establish the modular transformation properties and the algebraic relationships between the series $P(q)$ and the complete elliptic integrals
\begin{equation}\label{eq:reflection}
    \tau P(e^{-2\pi\sqrt{\tau}})+P(e^{-2\pi/\sqrt{\tau}})=\frac{6\sqrt{\tau}}{\pi},\hspace{.5cm}\Re(\sqrt{\tau}) > 0.
\end{equation}
Furthermore, by considering the modular transformations of degree 2, where the operators act on the duplicated nome $q^2$ i.e  Landen ascending transformation. Utilizing \eqref{eq:landeupwardke},\eqref{eq:jacobi_transke2},\eqref{eq:eta_squared_elliptic}, performing logarithmic  differentation, and using \eqref{eq:dqdk} we have the following algebraic relations involving the complete elliptic integrals $K(k)$ and $E(k)$
\begin{equation}\label{eq:degree2_nome1}
    2P(q^{2})-P(-q)=\frac{4K^{2}(k)}{\pi^2}(1-2k^2),
\end{equation}
\begin{equation}\label{eq:degree2_nome2}
    P(q^{2})=\frac{12E(k)K(k)}{\pi^2}+\frac{(4k^2-8)K^{2}(k)}{\pi^2},
\end{equation}
and
\begin{equation}\label{eq:degree2_nome3}
2P(q^{2})-P(q)=\frac{4K^2(k)}{\pi^2}(1+k^2).
\end{equation}
The sign inversion $q \to -q$ is done by the modular transformation mapping $k \to  \frac{ik}{k'}$. We shall employ the following modular identities, as recorded by Ramanujan in \cite{ramanujan1914} (Table III).
The modular identity of degree 3
\begin{equation}\label{eq:deg3}
3P(q^{6}) - P(q^{2}) = \frac{4K(k)K(l)}{\pi^2}(1 + kl + k'l'),
\end{equation}
the modular identity of degree 7
\begin{equation}\label{eq:deg7}
7P(q^{14}) - P(q^{2}) = \frac{12K(k)K(l)}{\pi^2}(1 + kl + k'l'),
\end{equation}
as well as his remarkable counterpart of degree 19
\begin{equation}\label{eq:deg19}
19P(q^{38}) - P(q^{2}) = \frac{24K(k)K(l)}{\pi^2}\left(1 + kl + k'l' + \sqrt{kl} + \sqrt{k'l'} - \sqrt{kk'll'}\right),
\end{equation}
where $k' = \sqrt{1-k^2}$ and $l' = \sqrt{1-l^2}$ denote the complementary moduli. These modular transformations are discussed in full detail by Berndt \cite[Chs.~19 and 20]{berndt1991}.
As a consequence of Clausen's hypergeometric product identity, which maps a $_{2}F_{1}$ function to a $_{3}F_{2}$ series, alongside the transformation theory connecting signatures 2 and 4, it follows that for $k \in (0, 3-2\sqrt{2}]$ we have
\begin{equation}\label{eq:clausen4}
K^2(k)= \frac{\pi^2}{4(1-2k^2)} \,_3F_2\left(\frac{1}{4}, \frac{1}{2}, \frac{3}{4}; 1, 1; \frac{16k^2(k^2-1)}{(2k^2-1)^4}\right).
\end{equation}
It is analytically advantageous to express these relations in terms of Ramanujan class invariant $G_{n}$.
To simplify the resulting representations, the parameter $X_n$ is defined as
\begin{equation*}
X_n = \frac{G_{n}^{24}}{64(G_{n}^{24}-1)^2},
\end{equation*}
which allows us to express the series
\begin{equation}\label{eq:clausenramanujan4k}
\begin{aligned}
K^2(k_n) = \frac{\pi^2G_{n}^{12}}{4\sqrt{G_{n}^{24}-1}} \sum_{m=0}^{\infty} \frac{(-1)^m(4m)!}{m!^4} X_n^m.
\end{aligned}
\end{equation}
And its counterpart for the product of both integrals
\begin{equation}\label{eq:clausenramanujan4ek}
E(k_{n})K(k_{n}) = \frac{\pi^2}{8(G_{n}^{24}-1)} \sum_{m=0}^{\infty} \frac{(-1)^m (4m)!\left[ G_{n}^{12}\sqrt{G_{n}^{24}-1} + G_{n}^{24}(2m+1) + 2m \right]}{m!^4} X_n^m.
\end{equation}

Identity \eqref{eq:clausen4} arises from the theory of hypergeometric functions, enabling a connection to Ramanujan's alternative theory of signature 4, a topic extensively investigated by Berndt et al.~\cite{berndtBhargavaGarvan1995} 
and the Borweins~\cite{borwein1987}. By differentiating \eqref{eq:clausen4} with respect to $k$ and employing the derivative formula for $\frac{\mathrm{d}K}{\mathrm{d}k}$ given in \eqref{eq:derke}, we obtain the latter relation, which can subsequently be reformulated in terms of Ramanujan--Weber class invariants. It should be noted that this latter formula does not hold for $G_4$.
We employ Ramanujan--Weber class invariants as computed by Weber \cite{weber1895} and Ramanujan \cite{ramanujan1914}
\begin{equation}
\begin{aligned}
G_{5}  &= \left( \frac{1 + \sqrt{5}}{2} \right)^{1/4}, \quad & G_{7}  &= 2^{1/4}, \\
G_{13} &= \left( \frac{3 + \sqrt{13}}{2} \right)^{1/4}, \quad & G_{37} &= \left( 6 + \sqrt{37} \right)^{1/4}.
\end{aligned}
\end{equation}
From relation \eqref{eq:ramanujaninvariantrelation}, it follows that the corresponding singular moduli are given by
\begin{equation}
\begin{aligned}
k_{5}  &= \sqrt{\frac{1}{2}-\sqrt{\sqrt{5}-2}}, \quad & k_{7} &= \frac{\sqrt{2}}{8}(3-\sqrt{7}), \\
k_{13} &=\frac{1}{2}\sqrt{5\sqrt{13}-17}+\frac{\sqrt{26}-5\sqrt{2}}{4}, \quad &    k_{37} &= \frac{1}{2}\sqrt{145\sqrt{37}-881} + \frac{29\sqrt{2}-5\sqrt{74}}{4}.
\end{aligned}
\end{equation}
These values satisfy 
\begin{equation}\label{eq:ratios}
\frac{K(k'_{r})}{K(k_{r})}=\sqrt{r},
\end{equation}
for $r=5, 7, 13, 37$, respectively.
\section{Explicit Evaluations of Ramanujan Series in the Theory of Signature 4}
\begin{theorem}
We have the classical series representations for $\frac{1}{\pi}$
\begin{align}
\sum_{m=0}^{\infty}\frac{(-1)^{m}(4m)!}{2^{10m}m!^{4}}(20m+3) &= \frac{8}{\pi} \label{eq:ramanujank5}, \\[1.5ex]
\sum_{m=0}^{\infty}\frac{(-1)^m(4m)!}{63^{2m}m!^{4}}(65m+8)=\frac{9\sqrt{7}}{\pi}\label{eq:brendetalk7}, \\[1.5ex]
\sum_{m=0}^{\infty}\frac{(-1)^{m}(4m)!}{288^{2m}m!^{4}}(260m+23) &= \frac{72}{\pi} \label{eq:ramanujank13}, \\[1.5ex]
\sum_{m=0}^{\infty}\frac{(-1)^{m}(4m)!}{14112^{2m}m!^4}(21460m+1123) &= \frac{3528}{\pi}\label{eq:ramanujank37}.
\end{align}
\end{theorem}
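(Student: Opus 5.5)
The four series all come from a single mechanism, applied with $n=r\in\{5,7,13,37\}$. Take the combination $a\,E(k_r)K(k_r)+b\,K^2(k_r)$ of \eqref{eq:clausenramanujan4k} and \eqref{eq:clausenramanujan4ek}. The factor $G_r^{24}(2m+1)+2m$ in \eqref{eq:clausenramanujan4ek} makes this an ${}_3F_2$-type sum with a linear weight $Am+B$ in $m$, with $A,B\in\mathbb{Q}(G_r^{12})$.

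The argument of the series is $-X_r$. With the listed values, $G_5^{24}=\bigl(\tfrac{1+\sqrt5}{2}\bigr)^6$, $G_7^{24}=64$, $G_{13}^{24}=\bigl(\tfrac{3+\sqrt{13}}{2}\bigr)^6$ and $G_{37}^{24}=(6+\sqrt{37})^6$. One checks that $X_7=64/(64\cdot 63^2)=1/63^2$, and likewise that $X_5=2^{-10}$, $X_{13}=288^{-2}$ and $X_{37}=14112^{-2}$. For example, $G_5^{12}-G_5^{-12}=8$ gives $X_5=1/(64\cdot 16)$. These are routine computations using $(G^{12}-G^{-12})^2=G^{24}-2+G^{-24}$.

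So each claimed identity is equivalent to one algebraic relation between $E(k_r)K(k_r)$, $K^2(k_r)$ and $\pi$. The plan is to produce this relation from the $P$-function identities, to find $a,b$ so that the combination gives exactly $(20m+3)$, $(65m+8)$, and so on, and to verify the constant $8/\pi$, $9\sqrt7/\pi$, etc.

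The relation is obtained by evaluating $P(q^2)$ at $q=e^{-\pi\sqrt r}$ in two ways.
\begin{itemize}
\item First, \eqref{eq:degree2_nome2} expresses $P(e^{-2\pi\sqrt r})$ through $E(k_r)K(k_r)$ and $K^2(k_r)$.
\item Second, the reflection formula \eqref{eq:reflection} with $\tau=r$ links $P(e^{-2\pi\sqrt r})$ and $P(e^{-2\pi/\sqrt r})$. I then need an independent expression for $P(e^{-2\pi/\sqrt r})$ or for a related value, in terms of $K(k_r)$ alone.
\end{itemize}

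For $r=7$ I would use the degree-2 relations. Set $\tau=7$ and consider $P(-q)$ with $q=e^{-\pi\sqrt7}$. The value $-q=e^{-\pi\sqrt7+\pi i}=e^{2\pi i(1+i\sqrt7)/2}$ is fixed, up to modular transformation, by $\tau\mapsto -1/\tau$ composed with a translation, since the form $\tfrac{1+\sqrt{-7}}{2}$ has norm 2. Consequently $P(-q)$ is related to $P(q^2)$ by a weight-2 quasi-modular transformation, which gives an extra $\tfrac{c\sqrt7}{\pi}$ term. Combining this with \eqref{eq:degree2_nome1} and \eqref{eq:degree2_nome2} eliminates the $P$-values and yields $E K$ as a multiple of $K^2$ plus a multiple of $\tfrac1\pi$.

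For $r=5,13,37$ the analogous degree-2 trick does not close the system. Instead I would pick $n=3,7,19$ with $r=(n+1)/4\cdot$\ldots. More precisely, I want $n$ such that $\sqrt{r}$ and $\sqrt{r}\cdot n$, or $\tfrac{\sqrt{4r}}{n}$-type nomes, are swapped by reflection. For $r=5$ I take $n=3$ together with a degree-2 step, since $15=3\cdot5$ and $4\cdot 5-\ldots$. The concrete choice is to apply \eqref{eq:deg3}, \eqref{eq:deg7} and \eqref{eq:deg19} with $k$ the modulus of nome $e^{-\pi\sqrt{r/n}}$, after passing through $q\to-q$ (i.e.\ $k\to ik/k'$) so that the nomes $q^2$ and $q^{2n}$ are related by \eqref{eq:reflection} exactly. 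Then $P(q^{2n})$ and $P(q^2)$ are tied both by the modular identity and by reflection; solving gives $P$ in closed form as (algebraic)$\cdot K^2+(\text{algebraic})/\pi$. The nome $-e^{-\pi\sqrt{r}}$ is the natural one because $n\cdot$ the relevant imaginary quadratic $\tau$ pairs up: $r=5,13,37$ with $n=3,7,19$ satisfy $4r=n+\ldots$, i.e.\ $20=3\cdot 7-1$, $52=7\cdot 7+3$, $148=19\cdot 8-4$. I would check the exact matching numerically first.

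\textbf{Main obstacle.} The hardest step is computing the required algebraic values $k,l$ (and $kl$, $k'l'$, $\sqrt{kl}$, etc., for degree 19) at the relevant nomes under the complex moduli $ik/k'$, together with the correct branches in \eqref{eq:jacobireciprocal}. I would first determine $l$ from $k_r$ via \eqref{eq:jacobi_transke2} and Landen, then verify the branch choices numerically, and only then simplify the radicals by hand to reach the rational coefficients $20,3$; $65,8$; $260,23$; $21460,1123$.
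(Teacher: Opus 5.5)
Your reduction matches the paper's: via \eqref{eq:clausenramanujan4k}--\eqref{eq:clausenramanujan4ek}, each series is equivalent to a single relation $aE(k_r)K(k_r)+bK^2(k_r)=\sqrt r\,\pi$. Your values of $X_r$ are correct, though $G_5^{12}-G_5^{-12}=(2+\sqrt5)-(\sqrt5-2)=4$, not $8$. The gap is in how you produce that relation.

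For $r=5,13,37$, your concrete choice of the real nome $q=e^{-\pi\sqrt{r/n}}$ cannot work. It gives $q^{2n}=e^{-2\pi\sqrt{rn}}$, while \eqref{eq:reflection} sends $q^2$ to $e^{-2\pi\sqrt{n/r}}$. Every point in play therefore has discriminant $-4rn$, and nothing returns to $k_r$; the sign change $q\to-q$ alters neither $q^2$ nor $q^{2n}$. With real nomes, a degree-$n$ identity plus reflection closes only at $q=e^{-\pi/\sqrt n}$, which evaluates $k_n$ --- the classical route.

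The missing idea is the complex nome $q=e^{-\pi(\sqrt r-i)/n}$ with $2n=r+1$. Then $q^{2n}=e^{-2\pi\sqrt r}$ exactly. Because $(\sqrt r-i)(\sqrt r+i)=2n$, \eqref{eq:reflection} with $\sqrt\tau=(\sqrt r-i)/n$ sends $q^2$ to $e^{-\pi(\sqrt r+i)}=-e^{-\pi\sqrt r}$. Equations \eqref{eq:degree2_nome1} and \eqref{eq:degree2_nome2} then tie this to $E(k_r)K(k_r)$ and $K^2(k_r)$. Your numerology ($20=3\cdot7-1$, \dots) does not detect this condition.

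Applying \eqref{eq:deg3}, \eqref{eq:deg7}, \eqref{eq:deg19} also requires explicit moduli. The first is $l_r=ik_r/k_r'$, with $K(l_r)=k_r'K(k_r)$ and, by \eqref{eq:jacobireciprocal}, $K(l_r')=(\sqrt r-i)k_r'K(k_r)$. The second is $s_r=(1+l_r)/(1-l_r)$, whose integrals follow from Landen's transformation combined with \eqref{eq:jacobireciprocal}, giving $nK(s_r')/K(s_r)=K(l_r')/K(l_r)=\sqrt r-i$. Only with these in hand can the right-hand sides, and hence the coefficients, be computed. Postponing all of this to a numerical check leaves the identities unproved.

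For $r=7$, your claim that $P(-e^{-\pi\sqrt7})=E_2(\tfrac{1+i\sqrt7}{2})$ and $P(e^{-2\pi\sqrt7})=E_2(i\sqrt7)$ are related by a weight-2 transformation is false. The two points have discriminants $-7$ and $-28$, so no element of $SL_2(\mathbb Z)$ connects them. They are linked only by the degree-2 relation \eqref{eq:degree2_nome1}, which you already count, so your system has one more unknown than equations.

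The norm-2 element gives the self-map $\tau\mapsto 1-2/\tau$ of $\tfrac{1+i\sqrt7}{2}$. This is a halving step followed by $\tau\mapsto-1/\tau$ and a translation, not an inversion and translation alone. Using it requires three ingredients:
\begin{itemize}
\item \eqref{eq:degree2_nome3} at the complex modulus $l_7=\tfrac{3+i\sqrt7}{8}$, whose nome is $ie^{-\sqrt7\pi/2}$;
\item $K(l_7)=(k_7'+ik_7)K(k_7)$, obtained from Landen and the Jacobi imaginary transformation;
\item \eqref{eq:reflection} at $\tau=\tfrac{3-i\sqrt7}{8}$, which maps $ie^{-\sqrt7\pi/2}$ to $-e^{-\sqrt7\pi}$.
\end{itemize}
This is the case $n=4$ (two degree-2 steps) of the pattern $2n=r+1$ above.
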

\begin{proof}
\noindent\textbf{Proof of \eqref{eq:ramanujank5}.}
For this case we use that $k_5=\sqrt{\frac{1}{2}-\sqrt{\sqrt{5}-2}}$ and then the complementary modulus is $k'_{5}=\sqrt{\frac{1}{2}+\sqrt{\sqrt{5}-2}}$. 
Let us introduce the intermediate complex modular parameters $s_{5}$ and $l_{5}$ defined as
\begin{equation*}
s_{5}=\sqrt{4\sqrt{5}-8}-i(2-\sqrt{5}),
\end{equation*}
and
\begin{equation*}
l_{5}=i\left(\sqrt{5}+2-\sqrt{4\sqrt{5}+8}\right).
\end{equation*}
Thus, the corresponding complementary moduli are 
\begin{equation*}
s'_{5} = \frac{3\sqrt{2}-\sqrt{10}}{2} - i\sqrt{5\sqrt{5}-11},
\end{equation*}
and 
\begin{equation*}
l'_{5} = \sqrt{5\sqrt{5}+11} - \frac{3\sqrt{2}+\sqrt{10}}{2}.
\end{equation*}
Via Jacobi imaginary transformation \eqref{eq:jacobi_transke}, we have
\begin{equation}\label{eq:case5_1}
K(l_{5})=k'_{5}K(k_{5}).
\end{equation}
Using Jacobi reciprocal transformation \eqref{eq:jacobireciprocal} 
\begin{equation}\label{eq:case5_2}
K(l'_{5})=K\left(\frac{1}{k'_5}\right)\overset{\eqref{eq:jacobireciprocal}}=k'_5\left(K\left(k'_{5} \right) -iK\left(k_{5} \right) \right)
\overset{K(k'_{5})/K(k_{5})=\sqrt{5}}=(\sqrt{5}-i)k'_{5}K(k_{5}).
\end{equation}
And also we get
\begin{align}\label{eq:case5_3}
K(s'_{5})\overset{\eqref{eq:landeupwardke}}=\left(1-l_5\right)K\left(-l_{5}\right)=\left(1-l_5\right)K\left(l_{5}\right)\overset{\eqref{eq:case5_1}}=\left(k'_5-ik_{5}\right)K(k_5).
\end{align}
Using Jacobi reciprocal transformation and Landen descending transformation for the first-kind integral we have respectively 
\begin{equation}\label{eq:case5_r}
K(\frac{1}{s_5})\overset{\eqref{eq:jacobireciprocal}}=s_{5}\left(K(s_{5})-iK(s'_{5}) \right), \quad K(\frac{1}{s_5})\overset{\eqref{eq:landendownwardke}}=\frac{1}{1+\frac{1}{s_{5}}}K(\frac{1}{k'_5}). 
\end{equation}
Then combining Eqs.~\eqref{eq:case5_2}, \eqref{eq:case5_3} and \eqref{eq:case5_r}  we get
\begin{equation}\label{eq:case5_4}
K(s_{5})=\left[\left(1+\sqrt{5}-\sqrt{\sqrt{5}+2} \right)k'_{5}+ik_{5}\left(1+\sqrt{\sqrt{5}+2} \right) \right]K(k_{5}).
\end{equation}
Using Eqs.~\eqref{eq:case5_1}, \eqref{eq:case5_2}, \eqref{eq:case5_3}, and \eqref{eq:case5_4}, we deduce the following ratios
\begin{equation*}
\frac{K(s'_5)}{K(s_{5})}=\frac{\sqrt{5}}{3}-\frac{i}{3},\quad \frac{K(l'_{5})}{K(l_{5})}=\sqrt{5}-i.
\end{equation*}
Since $3\frac{K(s'_5)}{K(s_5)}=\frac{K(l'_5)}{K(l_5)}$, it follows that $l_5$ has degree 3 over $s_{5}$, which satisfies the conditions to apply \eqref{eq:deg3} with the nome $q=e^{-(\sqrt{5}/3-i/3)\pi}=i^{\frac{2}{3}}e^{-\frac{\sqrt{5}\pi}{3}}$. Substituting these into \eqref{eq:deg3}, we obtain
\begin{align}\label{eq:modular_k5_deg3}
3P(e^{-2\sqrt{5}\pi})-P\left(i^{\frac{4}{3}}e^{-\frac{2\sqrt{5}\pi}{3}}\right)=\frac{4K(s_5)K(l_5)}{\pi^2}(1+s_{5}l_{5}+s'_{5}l'_{5}) \nonumber \\
=\frac{K^2(k_{5})}{\pi^2}\left(\sqrt{136\sqrt{5}-248}+i\sqrt{104\sqrt{5}-232} \right),
\end{align}
where, in the final step, we have utilized the values of $s_{5}$,$s'_{5}$,$l_{5}$ and $l'_{5}$ alongside Eqs.~\eqref{eq:case5_1} and~\eqref{eq:case5_4}.
Subsequently, substituting $\tau = \frac{4}{9} - \frac{2\sqrt{5}i}{9}$ into \eqref{eq:reflection} yields
\begin{equation}\label{eq:modular_k5_1}
\left(\frac{4}{9} - \frac{2\sqrt{5}i}{9} \right)P\left(i^{\frac{4}{3}}e^{-\frac{2\sqrt{5}\pi}{3}}\right)+P(-e^{-\sqrt{5}\pi})=\frac{2\sqrt{5}}{\pi}-\frac{2i}{\pi}.
\end{equation}
Substituting $k = k_{5}$ into \eqref{eq:degree2_nome1} yields
\begin{equation}\label{eq:modular_k5_2}
2P(e^{-2\sqrt{5}\pi})-P(-e^{-\sqrt{5}\pi})=\frac{4K^2(k_{5})}{\pi^2}(1-2k^2_{5}).
\end{equation}
Similarly, substituting $k = k_{5}$ into \eqref{eq:degree2_nome2} yields
\begin{equation}\label{eq:modular_k5_3}
P(e^{-2\sqrt{5}\pi})=\frac{12E(k_5)K(k_5)}{\pi^2}+\frac{(4k_5^2-8)K^{2}(k_5)}{\pi^2}.
\end{equation}
Therefore, combining Eqs.~\eqref{eq:modular_k5_deg3}, \eqref{eq:modular_k5_1}, \eqref{eq:modular_k5_2}, and \eqref{eq:modular_k5_3}, we arrive at
\begin{samepage}
\begin{align*}
\frac{2\sqrt{5}}{\pi}-\frac{2i}{\pi} = \frac{40E(k_{5})K(k_{5})}{\pi^2}  &- \left(\sqrt{160\sqrt{5} - 160} + 20\right) \frac{K^2(k_{5})}{\pi^2} \\
&+ i \left[ \left(\sqrt{32\sqrt{5} - 32} + 4\sqrt{5}\right) \frac{K^2(k_{5})}{\pi^2} - \frac{8\sqrt{5}E(k_{5})K(k_{5})}{\pi^2} \right].
\end{align*}
\end{samepage}
Equating either the real or the imaginary parts on both sides of this final expression yields the same relation, which can be written by introducing $G_{5}$ as
\begin{equation}\label{eq:rel_n5}
 20E(k_{5})K(k_{5})-\left(\frac{4\sqrt{5}}{G_5^2}+10\right)K^2(k_{5})=\sqrt{5}\pi.
\end{equation}
Using Eqs.~\eqref{eq:clausenramanujan4k}, \eqref{eq:clausenramanujan4ek} and \eqref{eq:rel_n5}, then $X_{5}=\frac{1}{2^{10}}$ since $G_{5}=(\frac{1+\sqrt{5}}{2})^{1/4}$ and we get
\begin{align*}
\sqrt{5}\pi=20E(k_{5})K(k_{5})-\left(\frac{4\sqrt{5}}{G_5^2}+10\right)K^2(k_{5}) &= \frac{\sqrt{5}\pi^2}{8}\sum_{m=0}^{\infty}\frac{(-1)^{m}(4m)!}{2^{10m}m!^{4}}(20m+3) \nonumber \\
\implies \sum_{m=0}^{\infty}\frac{(-1)^{m}(4m)!}{2^{10m}m!^{4}}(20m+3) = \frac{8}{\pi}.
\end{align*}

\medskip
\noindent\textbf{Proof of \eqref{eq:brendetalk7}.}
For this case we use $k_{7}=\frac{\sqrt{2}}{8}(3-\sqrt{7})$ and its complementary modulus $k'_{7}=\frac{\sqrt{2}}{8}(3+\sqrt{7})$. Let us introduce the intermediate complex modular parameter
\begin{equation*}
l_{7}=\frac{3}{8}+\frac{\sqrt{7}i}{8},
\end{equation*}
then the complementary modulus is
\begin{equation*}
l'_{7}=\frac{3\sqrt{7}}{8}-\frac{i}{8}.
\end{equation*}
Using Landen ascending transformation and Jacobi imaginary transformation we obtain
\begin{equation}\label{eq:ratio1_k7}
K(l_{7})\overset{\eqref{eq:landeupwardke}}=(1+i(8-3\sqrt{7})K(i(8-3\sqrt{7}))\overset{\eqref{eq:jacobi_transke}}=(k'_{7}+ik_{7})K(k_{7}).
\end{equation}
We have also
\begin{align}\label{eq:ratio2_k7}
K(l'_{7})\overset{\eqref{eq:landeupwardke}}=(\frac{11}{8}-\frac{\sqrt{7}i}{8})K(\frac{3}{8}-\frac{\sqrt{7}i}{8})\overset{\eqref{eq:landeupwardke}}=(4-\sqrt{7}+i(4\sqrt{7}-11))K(i(3\sqrt{7}-8)) \nonumber \\
 \overset{\eqref{eq:jacobi_transke}}=\left[\frac{5\sqrt{2}+\sqrt{14}}{8}+i(\frac{\sqrt{14}-5\sqrt{2}}{8}) \right]K(k_{7}).
\end{align}
Using Eqs.~\eqref{eq:ratio1_k7} and \eqref{eq:ratio2_k7} we deduce the ratio
\begin{equation*}
\frac{K(l'_{7})}{K(l_{7})}=\frac{\sqrt{7}}{2}-\frac{i}{2}.
\end{equation*}
So the corresponding nome associated to $l_{7}$ is $q=e^{-\pi(\frac{\sqrt{7}}{2}-\frac{i}{2})}=ie^{-\sqrt{7}\pi/2}$. Then replacing this nome in Eq. \eqref{eq:degree2_nome3}
\begin{equation}\label{eq:modular_k7_1}
2P(-e^{-\sqrt{7}\pi})-P(ie^{-\sqrt{7}\pi/2})=\frac{4K^2(l_{7})}{\pi^2}(1+l_{7}^2)=\frac{K^2(k_{7})}{2\pi^2}\left(3\sqrt{7}+3i\right),
\end{equation}
where, in the final step, we have utilized the values of $l_{7}$ alongside Eq.~\eqref{eq:ratio1_k7}. Subsequently, substituting $\tau = \frac{3}{8}-\frac{\sqrt{7}i}{8}$ into \eqref{eq:reflection} yields
\begin{equation}\label{eq:modular_k7_2}
\left(\frac{3}{8}-\frac{\sqrt{7}i}{8}\right)P(ie^{-\sqrt{7}\pi/2})+P(-e^{-\sqrt{7}\pi})=\frac{3\sqrt{7}}{2\pi}-\frac{3i}{2\pi}.
\end{equation}
Substituting $k = k_{7}$ into \eqref{eq:degree2_nome1} yields
\begin{equation}\label{eq:modular_k7_3}
2P(e^{-2\sqrt{7}\pi})-P(-e^{-\sqrt{7}\pi})=\frac{4K^2({k_{7}})}{\pi^2}(1-2k_{7}^2).
\end{equation}
Similarly, substituting $k = k_{7}$ into \eqref{eq:degree2_nome2} yields
\begin{equation}\label{eq:modular_k7_4}
P(e^{-2\sqrt{7}\pi})=\frac{12E(k_{7})K(k_{7})}{\pi^2}+\frac{(4k_{7}^2-8)K^2(k_{7})}{\pi^2}.
\end{equation}
Therefore, combining Eqs.~\eqref{eq:modular_k7_1}, \eqref{eq:modular_k7_2}, \eqref{eq:modular_k7_3}, and \eqref{eq:modular_k7_4}, we arrive at
\begin{equation*}
\frac{3\sqrt{7}}{2\pi}-\frac{3i}{2\pi}=\frac{42E(k_{7})K(k_{7})}{\pi^2}-\frac{(6\sqrt{7}+21)K^2(k_{7})}{\pi^2}+i\left[\frac{(3\sqrt{7}+6)K^2(k_{7})}{\pi^2}-\frac{6\sqrt{7}K(k_{7})E(k_{7})}{\pi^2} \right].
\end{equation*}
Equating either the real or the imaginary parts on both sides of this final expression yields the same relation
\begin{equation}\label{eq:rel_n7}
28E(k_{7})K(k_{7})-(4\sqrt{7}+14)K^2(k_{7})=\sqrt{7}\pi.
\end{equation}
Using Eqs.~\eqref{eq:clausenramanujan4k}, \eqref{eq:clausenramanujan4ek} and \eqref{eq:rel_n7}, then $X_{7}=\frac{1}{63^2}$ since $G_{7}=2^{1/4}$ and we get
\begin{align*}
\sqrt{7}\pi = 28E(k_7)K(k_7) - (4\sqrt{7}+14)K^2(k_7) &= \frac{\pi^2}{9}\sum_{m=0}^{\infty}\frac{(-1)^m(4m)!}{63^{2m}m!^{4}}(65m+8) \nonumber \\
\implies \sum_{m=0}^{\infty}\frac{(-1)^m(4m)!}{63^{2m}m!^{4}}(65m+8)=\frac{9\sqrt{7}}{\pi}.
\end{align*}

\medskip
\noindent\textbf{Proof of \eqref{eq:ramanujank13}.}
For this case we use that $k_{13}=\frac{1}{2}\sqrt{5\sqrt{13}-17}+\frac{\sqrt{26}-5\sqrt{2}}{4}$ and then the complementary modulus is $k'_{13}=\frac{1}{2}\sqrt{5\sqrt{13}-17}+\frac{5\sqrt{2}-\sqrt{26}}{4}$. 
Let us introduce the intermediate complex modular parameters $s_{13}$ and $l_{13}$ defined as
\begin{equation*}
s_{13}=\sqrt{180\sqrt{13}-648}+i(5\sqrt{13}-18),
\end{equation*}
and
\begin{equation*}
l_{13}=i\left(18+5\sqrt{13}-\sqrt{180\sqrt{13}+648}\right).
\end{equation*}
Thus, the corresponding complementary moduli are
\begin{equation*}
s'_{13}=\frac{7\sqrt{26}-25\sqrt{2}}{2}-i\sqrt{185\sqrt{13}-667},
\end{equation*}
and 
\begin{equation*}
l'_{13}=\sqrt{185\sqrt{13}+667}-\frac{7\sqrt{26}+25\sqrt{2}}{2}.
\end{equation*}
As in the previous case, we require the complete elliptic integrals $K(s_{13})$, $K(s'_{13})$, $K(l_{13})$, and $K(l'_{13})$.

Via Jacobi imaginary transformation \eqref{eq:jacobi_transke} we obtain
\begin{equation}\label{eq:case13_1}
    K(l_{13})=k'_{13}K(k_{13}).
\end{equation}
And
\begin{equation}\label{eq:case13_2}
    K(l'_{13})=K(\frac{1}{k'_{13}})\overset{\eqref{eq:jacobireciprocal}}=k'_{13}\left(K\left(k'_{13} \right) -iK\left(k_{13} \right) \right)
\overset{K(k'_{13})/K(k_{13})=\sqrt{13}}=(\sqrt{13}-i)k'_{13}K(k_{13}).
\end{equation}
Furthermore, via \eqref{eq:landeupwardke}, we have
\begin{equation}\label{eq:case13_3}
   K(s'_{13})\overset{\eqref{eq:landeupwardke}}=\left(1-l_{13}\right)K\left(-l_{13}\right)=\left(1-l_{13}\right)K\left(l_{13}\right)\overset{\eqref{eq:case13_1}}=\left(k'_{13}-ik_{13}\right)K(k_{13}).
\end{equation}
Using Jacobi reciprocal transformation and Landen descending transformation for the first-kind integral we have respectively 
\begin{equation}\label{eq:case13_r}
K(\frac{1}{s_{13}})\overset{\eqref{eq:jacobireciprocal}}=s_{13}\left(K(s_{13})-iK(s'_{13}) \right), \quad K(\frac{1}{s_{13}})\overset{\eqref{eq:landendownwardke}}=\frac{1}{1+\frac{1}{s_{13}}}K(\frac{1}{k'_{13}}). 
\end{equation}
Then combining Eqs.~\eqref{eq:case13_2}, \eqref{eq:case13_3} and \eqref{eq:case13_r}  we get
\begin{equation}\label{eq:case13_4}
K(s_{13})=\left[\left(9+3\sqrt{13}-\sqrt{45\sqrt{13}+162} \right)k'_{13} +ik_{13}\left(9+2\sqrt{13}+\sqrt{45\sqrt{13}+162} \right)\right]K(k_{13}).
\end{equation}
Using Eqs.~\eqref{eq:case13_1}, \eqref{eq:case13_2}, \eqref{eq:case13_3}, and \eqref{eq:case13_4}, we deduce the following ratios
\begin{equation*}
    \frac{K(s'_{13})}{K(s_{13})}=\frac{\sqrt{13}}{7}-\frac{i}{7},\quad \frac{K(l'_{13})}{K(l_{13})}= \sqrt{13}-i.
\end{equation*}
Because these ratios fulfill the strict scaling requirement
\begin{equation*}
    7\frac{K(s'_{13})}{K(s_{13})}=\frac{K(l'_{13})}{K(l_{13})},
\end{equation*}
it follows that $l_{13}$ has degree 7 over $s_{13}$, which satisfies the conditions to apply \eqref{eq:deg7} with the nome $q=e^{-(\sqrt{13}/7-i/7)\pi}=(-1)^{1/7}e^{-\sqrt{13}\pi/7}$. To simplify the notation, we suppress the index $13$ within the brackets by setting $s \equiv s_{13}$ and $l \equiv l_{13}$. Substituting these into \eqref{eq:deg7} we obtain
\begin{align}
\label{eq:modular_k13_deg7}
7P \left( e^{-2\sqrt{13}\pi} \right) &- P \left( (-1)^{2/7} e^{-2\sqrt{13}\pi / 7} \right) = \frac{12 K(s) K(l)}{\pi^2} \left[ 1 + sl + s'l' \right] \nonumber \\
&= \frac{K^2(k_{13})}{\pi^2} \left[\sqrt{105768\sqrt{13}-380664}+i\sqrt{20232\sqrt{13} -72936}\right],
\end{align}
where, in the final step, we have utilized the values of $s_{13}$,$s'_{13}$,$l_{13}$ and $l'_{13}$ alongside Eqs.~\eqref{eq:case13_1} and~\eqref{eq:case13_4}.
Next, evaluating the functional reflection identity  \eqref{eq:reflection}  at the specific complex variable argument $\tau=\frac{12}{49}-\frac{2\sqrt{13}i}{49}$, we find
\begin{equation}\label{eq:modular_k13_1}
\left(\frac{12}{49}-\frac{2\sqrt{13}i}{49} \right)P\left( (-1)^{2/7} e^{-2\sqrt{13}\pi / 7}\right)+P(-e^{-\sqrt{13}\pi})=\frac{6\sqrt{13}}{7\pi}-\frac{6i}{7\pi}.
\end{equation}
Furthermore, specifying the singular value $k_{13}$ into the structural relations \eqref{eq:degree2_nome1} and \eqref{eq:degree2_nome2} yields the following two evaluations
\begin{equation}\label{eq:modular_k13_2}
    2P\left(e^{-2\sqrt{13}\pi}\right)-P\left(-e^{-\sqrt{13}\pi}\right)=\frac{4K^2({k_{13}})}{\pi^2}(1-2k^2_{13}),
\end{equation}
\begin{equation}\label{eq:modular_k13_3}
    P\left(e^{-2\sqrt{13}\pi}\right)=\frac{12E(k_{13})K(k_{13})}{\pi^2}+\frac{(4k_{13}^2-8)K^2(k_{13})}{\pi^2}.
\end{equation}
Therefore, combining Eqs.~\eqref{eq:modular_k13_deg7}, \eqref{eq:modular_k13_1}, \eqref{eq:modular_k13_2}, and \eqref{eq:modular_k13_3}, we arrive at
\begin{align*}
\frac{6\sqrt{13}}{7\pi}-\frac{6i}{7\pi} &= \frac{312E(k_{13})K(k_{13})}{7\pi^2} - \left(\sqrt{138528\sqrt{13} - 482976} + 156\right) \frac{K^2(k_{13})}{7\pi^2} \\
& \quad + i \left[ \left(\sqrt{10656\sqrt{13}-37152} +12\sqrt{13}\right) \frac{K^2(k_{13})}{7\pi^2} - \frac{24\sqrt{13}E(k_{13})K(k_{13})}{7\pi^2} \right].
\end{align*}

Equating either the real or the imaginary parts on both sides of this final expression yields the same relation, which can be written by introducing $G_{13}$ as
\begin{equation}\label{eq:rel_n13}
 52E(k_{13})K(k_{13})-\left(\frac{18\sqrt{13}-26}{G_{13}^2}+26\right)K^2(k_{13})=\sqrt{13}\pi.
\end{equation}
Using Eqs.~\eqref{eq:clausenramanujan4k}, \eqref{eq:clausenramanujan4ek} and \eqref{eq:rel_n13}, then $X_{13}=\frac{1}{288^{2}}$ since $G_{13}=(\frac{3+\sqrt{13}}{2})^{1/4}$ and we get
\begin{samepage}
\begin{align*}
\hspace{-1cm}\sqrt{13}\pi &= 52E(k_{13})K(k_{13})-\left(\frac{18\sqrt{13}-26}{G_{13}^2}+26\right)K^2(k_{13}) \\[1.5ex] 
&\hspace{-1cm}= \frac{\sqrt{13}\pi^2}{72}\sum_{m=0}^{\infty}\frac{(-1)^{m}(4m)!}{288^{2m}m!^{4}}(260m+23) \implies \sum_{m=0}^{\infty}\frac{(-1)^{m}(4m)!}{288^{2m}m!^{4}}(260m+23)=\frac{72}{\pi}.
\end{align*}
\end{samepage}

\medskip
\noindent\textbf{Proof of \eqref{eq:ramanujank37}.}
For this case we use
\begin{equation*}
    k_{37} = \frac{1}{2}\sqrt{145\sqrt{37}-881} + \frac{29\sqrt{2}-5\sqrt{74}}{4},\quad     k'_{37}=\frac{1}{2}\sqrt{145\sqrt{37}-881} + \frac{5\sqrt{74}-29\sqrt{2}}{4}.
\end{equation*}
Next, we introduce the intermediate complex modular parameters $s_{37}$ and $l_{37}$, defined as follows
\begin{equation*}
    s_{37}=42\sqrt{145\sqrt{37}-882}+i\left(145\sqrt{37}-882\right),
\end{equation*}
\begin{equation*}
   l_{37}=i\left(145\sqrt{37}+882-42\sqrt{145\sqrt{37}+882}\right).
\end{equation*}
Thus, the corresponding complementary moduli are
\begin{equation*}
s'_{37}=\frac{1247\sqrt{2}-205\sqrt{74}}{2}-i\sqrt{255925\sqrt{37}-1556731},
\end{equation*}
and 
\begin{equation*}
l'_{37}=\sqrt{255925\sqrt{37}+1556731}-\frac{1247\sqrt{2}+205\sqrt{74}}{2}.
\end{equation*}
As in the previous cases, we require the complete elliptic integrals $K(s_{37})$, $K(s'_{37})$, $K(l_{37})$, and $K(l'_{37})$.
Via Jacobi imaginary transformation \eqref{eq:jacobi_transke} we obtain
\begin{equation}\label{eq:case37_1}
    K(l_{37})=k'_{37}K(k_{37}).
\end{equation}
And
\begin{equation}\label{eq:case37_2}
    K(l'_{37})=K(\frac{1}{k'_{37}})\overset{\eqref{eq:jacobireciprocal}}=k'_{37}\left(K\left(k'_{37} \right) -iK\left(k_{37} \right) \right)
\overset{K(k'_{37})/K(k_{37})=\sqrt{37}}=(\sqrt{37}-i)k'_{37}K(k_{37}).
\end{equation}
Furthermore, via \eqref{eq:landeupwardke}, we have
\begin{equation}\label{eq:case37_3}
   K(s'_{37})\overset{\eqref{eq:landeupwardke}}=\left(1-l_{37}\right)K\left(-l_{37}\right)=\left(1-l_{37}\right)K\left(l_{37}\right)\overset{\eqref{eq:case37_1}}=\left(k'_{37}-ik_{37}\right)K(k_{37}).
\end{equation}
Using Jacobi reciprocal transformation and Landen descending transformation for the first-kind integral we have respectively 
\begin{equation}\label{eq:case37_r}
K(\frac{1}{s_{37}})\overset{\eqref{eq:jacobireciprocal}}=s_{37}\left(K(s_{37})-iK(s'_{37}) \right), \quad K(\frac{1}{s_{37}})\overset{\eqref{eq:landendownwardke}}=\frac{1}{1+\frac{1}{s_{37}}}K(\frac{1}{k'_{37}}). 
\end{equation}
Then combining Eqs.~\eqref{eq:case37_2}, \eqref{eq:case37_3} and \eqref{eq:case37_r}  we get
\begin{equation}\label{eq:case37_4}
\begin{split}
K(s_{37}) = \Bigg[ &\left(441+73\sqrt{37}-\sqrt{63945\sqrt{37}+388962} \right)k'_{37} \\
&+ik_{37}\left(441+72\sqrt{37}+\sqrt{63945\sqrt{37}+388962}\right)\Bigg]K(k_{37}).
\end{split}
\end{equation}
Using Eqs.~\eqref{eq:case37_1}, \eqref{eq:case37_2}, \eqref{eq:case37_3}, and \eqref{eq:case37_4}, we deduce the following ratios
\begin{equation*}
    \frac{K(s'_{37})}{K(s_{37})}=\frac{\sqrt{37}}{19}-\frac{i}{19},\quad \frac{K(l'_{37})}{K(l_{37})}= \sqrt{37}-i.
\end{equation*}
Because these ratios fulfill the strict scaling requirement
\begin{equation*}
    19\frac{K(s'_{37})}{K(s_{37})}=\frac{K(l'_{37})}{K(l_{37})},
\end{equation*}
it follows that $l_{37}$ has degree 19 over $s_{37}$, which satisfies the conditions to apply \eqref{eq:deg19} with the nome $q=e^{-(\sqrt{37}/19-i/19)\pi}=(-1)^{1/19}e^{-\sqrt{37}\pi/19}$. To simplify the notation, we suppress the index $37$ within the brackets by setting $s \equiv s_{37}$ and $l \equiv l_{37}$. Substituting these into \eqref{eq:deg19} we obtain
\begin{samepage}
\begin{align}\label{eq:modular_k37_deg19}
    19P \left( e^{-2\sqrt{37}\pi} \right) &- P \left((-1)^{2/19}e^{-2\sqrt{37}\pi / 19}  \right) \nonumber \\
    &=  \frac{24K(s)K(l)}{\pi^2}\left[1 + sl+ s'l' + \sqrt{sl} + \sqrt{s'l'} - \sqrt{ss'll'}\right]   \nonumber  \\
    &= \frac{6\sqrt{2}K^2(k_{37})}{\pi^2}\left[\sqrt{26744161\sqrt{37}-162678271}+i\sqrt{1494589\sqrt{37}-9091229} \right],
\end{align}
\end{samepage}
where, in the final step, we have utilized the values of $s_{37}$,$s'_{37}$,$l_{37}$ and $l'_{37}$ alongside Eqs.~\eqref{eq:case37_1} and~\eqref{eq:case37_4}.
Next, evaluating the functional reflection identity  \eqref{eq:reflection}  at the specific complex variable argument $\tau=\frac{36}{361}-\frac{2\sqrt{37}i}{361}$, we find
\begin{equation}\label{eq:modular_k37_1}
\left(\frac{36}{361}-\frac{2\sqrt{37}i}{361}\right)P\left((-1)^{2/19}e^{-2\sqrt{37}\pi/19}\right)+P\left(-e^{-\sqrt{37}\pi}\right)=\frac{6\sqrt{37}}{19\pi}-\frac{6i}{19\pi}.
\end{equation}
Furthermore, specifying the singular value $k_{37}$ into the structural relations \eqref{eq:degree2_nome1} and \eqref{eq:degree2_nome2} yields the following two evaluations
\begin{equation}\label{eq:modular_k37_2}
    2P\left(e^{-2\pi\sqrt{37}}\right)-P\left(-e^{-\sqrt{37}\pi}\right)=\frac{4K^2({k_{37}})}{\pi^2}(1-2k^2_{37}),
\end{equation}
\begin{equation}\label{eq:modular_k37_3}
    P\left(e^{-2\pi\sqrt{37}}\right)=\frac{12E(k_{37})K(k_{37})}{\pi^2}+\frac{(4k_{37}^2-8)K^2(k_{37})}{\pi^2}.
\end{equation}
Therefore, combining Eqs.~\eqref{eq:modular_k37_deg19}, \eqref{eq:modular_k37_1}, \eqref{eq:modular_k37_2}, and \eqref{eq:modular_k37_3}, we arrive at
\begin{equation*}
\begin{aligned}
    \frac{6\sqrt{37}}{19\pi}-\frac{6i}{19\pi} = &\frac{888E(k_{37})K(k_{37})}{19\pi^2}-\left(\frac{444}{19}+\sqrt{\frac{552332448\sqrt{37}-3359549088}{361}}\right)\frac{K^2(k_{37})}{\pi^2} \\
    &\hspace{-1cm}+i\left[\left(\frac{12\sqrt{37}}{19}+\sqrt{\frac{14927904\sqrt{37}-90798624}{361}} \right)\frac{K^2(k_{37})}{\pi^2} 
    -\frac{24\sqrt{37}E(k_{37})K(k_{37})}{19\pi^2}\right].
\end{aligned}
\end{equation*}
Equating either the real or the imaginary parts on both sides of this final expression yields the same relation, which can be written by introducing $G_{37}$ as
\begin{equation}\label{eq:rel_n37}
  148E(k_{37})K(k_{37})-\left(\frac{2\left(171\sqrt{37}-925\right)}{G_{37}^{2}}+74\right)K^2(k_{37})=\sqrt{37}\pi.
\end{equation}
Using Eqs.~\eqref{eq:clausenramanujan4k}, \eqref{eq:clausenramanujan4ek} and \eqref{eq:rel_n37}, then $X_{37}=\frac{1}{14112^{2}}$ since $G_{37}=(6+\sqrt{37})^{1/4}$ and we get
\begin{align*}
\sqrt{37}\pi &= 148E(k_{37})K(k_{37})-\left(\frac{2\left(171\sqrt{37}-925\right)}{G_{37}^{2}}+74\right)K^2(k_{37}) \\[1.5ex] 
&\hspace{-1cm}= \frac{\sqrt{37}\pi^2}{3528} \sum_{m=0}^{\infty}\frac{(-1)^{m}(4m)!}{14112^{2m}m!^4}(21460m+1123) \implies \sum_{m=0}^{\infty}\frac{(-1)^{m}(4m)!}{14112^{2m}m!^4}(21460m+1123) = \frac{3528}{\pi}.
\end{align*}

\end{proof}
The series \eqref{eq:brendetalk7} was established by Berndt, Chan, and Liaw \cite{berndtChanLiaw2001}. The remaining identities, \eqref{eq:ramanujank5}, \eqref{eq:ramanujank13}, and \eqref{eq:ramanujank37}, appear as formulas (35), (37), and (39), respectively, in Ramanujan's work \cite{ramanujan1914}.

\section{Conclusion}
The analytical approach presented in this work establishes an alternative path for deriving several Ramanujan-type series for $1/\pi$ via manageable lower-degree reductions facilitated by the introduction of complex numbers. By deconstructing the evaluation of the cases $r = 5, 7, 13,$ and $37$, we demonstrate that complex algebraic structures can be entirely resolved without appealing to higher-degree modular transformations obviating the need for degrees $5, 7, 13,$ and $37$ typically employed in standard proofs of these series. 

In particular, we explicitly explain the origin of the coefficients $21460$ and $1123$ in the Ramanujan series \eqref{eq:ramanujank37} without relying on computational assistance. This strongly suggests that Ramanujan's approach was systematic and that he deduced these coefficients through purely analytical, rather than empirical, means.

\backmatter

\section*{Declarations}
\begin{itemize}
    \item \textbf{Acknowledgments:} The author wishes to express his sincere gratitude to Paramanand Singh for his highly valuable and insightful comments. Generative AI was used solely for English language translation and linguistic polishing of the manuscript.
    \item \textbf{Funding:} The author received no financial support for the research, authorship, and/or publication of this article.
    \item \textbf{Conflict of interest:} The author declares no competing interests.
    \item \textbf{Ethical approval:} Not applicable.
    \item \textbf{Data availability:} Data sharing is not applicable to this article as no datasets were generated or analyzed during the current study.
    \item \textbf{Authors' contributions:} Entirely prepared by the sole author.
\end{itemize}

\end{document}